\newtheorem{theo}{Theorem}
\newtheorem{claim}{Claim}
\newtheorem{prop}{Proposition}
\def\x{{\sf x}}
\def\y{{\sf y}}
\def\z{{\sf z}}
\def\C{{\sf C}}
\def\ux{u^\C_\x}
\def\uxp{u^{\C'}_\x}
\def\uy{u^\C_\y}
\def\uz{u^\C_\z}
\def\vx{v^\C_\x}
\def\vxp{v^{\C'}_\x}
\def\vy{v^\C_\y}
\def\vz{v^\C_\z}
\def\Ec{E^\C}
\def\Uc{U^\C}
\title{Decomposition into two trees with orientation constraints}
\author{Olivier Durand de Gevigney
\thanks{
Laboratoire G-SCOP, CNRS, Grenoble-INP, UJF, France}
\thanks{partially supported by the TEOMATRO grant ANR-10-BLAN 0207}}
\begin{document}

\maketitle

\begin{abstract}
  We prove that deciding whether the edge set of a graph can be partitionned
  into two spanning trees with orientation constraints is NP-complete. If $P
  \neq NP$, this disproves a conjecture of Recski \cite{Recski2011}.
\end{abstract}

Let $G=(V,E)$ be a graph. \emph{Orienting} an edge $uv \in E$ means replacing 
the edge $uv$ by an arc $uv$ or an arc $vu$. For $m \in \mathbb{Z}_+^V$, an
\emph{$m$-orientation} of $F \subseteq E$ is an orientation of the edges in $F$
such that the number of arcs of $F$ leaving $v$ is $m(v)$ for each $v \in V$.
Given $b,r \in \mathbb{Z}_+^V$, a \emph{$(b,r)$-partition} of $E$ is a partition
of $E$ into a blue and a red spanning tree such that the blue tree has a
$b$-orientation and the red tree has an $r$-orientation. Recski
\cite{Recski2011} conjectured that the existence of a $(b,r)$-partition can be
decided in polynomial time.  The following theorem anwsers negatively this
question if $P \neq NP$.
\begin{theo}\label{th:RecskiNPC}
  Let $G=(V,E)$ be a graph and $b,r \in \mathbb{Z}_+^V$. Deciding
  whether there exists a $(b,r)$-partition of $E$ is NP-complete.
\end{theo}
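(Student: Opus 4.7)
Membership in NP is immediate: a certificate is the blue/red partition of $E$ together with an orientation of every edge, and one checks in polynomial time that each colour class contains $|V|-1$ edges and is connected, and that the out-degree vectors match $b$ and $r$. The content of the theorem is therefore the NP-hardness, and my plan is a polynomial reduction from \textbf{3-SAT}.

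Given an instance $\Phi$ with variables $x_1,\dots,x_n$ and clauses $C_1,\dots,C_m$, I would construct a graph $G_\Phi$ with vectors $b,r\in\mathbb{Z}_+^V$ such that $G_\Phi$ admits a $(b,r)$-partition iff $\Phi$ is satisfiable. The notation reserved in the preamble points to the following gadget structure. For each clause $\C=\x\lor\y\lor\z$ there is a clause gadget on a vertex set $\Uc\supseteq\{\ux,\vx,\uy,\vy,\uz,\vz\}$ with edge set $\Ec$, in which the three literal ``ports'' $(\ux,\vx),(\uy,\vy),(\uz,\vz)$ each admit two local states, read as ``this literal satisfies $\C$'' or ``it does not''; the values of $b$ and $r$ inside $\Ec$ are chosen so that a local $(b,r)$-partition of $\Ec$ exists iff at least one of the three ports is in the satisfying state, which encodes the ``at least one literal true'' constraint. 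For each variable $x$, a small variable gadget (a short cycle or a pair of parallel edges between two terminals are natural candidates) has exactly two valid local $(b,r)$-configurations, coding $x=\top$ and $x=\bot$. A literal wire\,---\,a path along which $b$ and $r$ force a rigid alternation of colours and orientations\,---\,then links the variable gadget to every port $(\ux,\vx),(\uxp,\vxp),\dots$ corresponding to an occurrence of $x$ (positive or negative), forcing the port state to match the variable state.

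The main obstacle is the \emph{global} spanning-tree property: each colour class must be a tree of $G_\Phi$, so local gadget-feasibility is not enough. I would therefore add a backbone subgraph that glues all the gadgets together, with $b$ and $r$ on it chosen so as to absorb exactly the freedom needed for connectivity and acyclicity of both colour classes while remaining itself orientationally rigid. A second delicate point is to check, using e.g.\ Hakimi's tree-orientation criterion, that the prescribed $b$ and $r$ are feasible on every colour class corresponding to a locally consistent choice of states, so that the only way for the $(b,r)$-partition to fail is the logical one we want. Once these points are handled, the two directions of the reduction are standard: a satisfying assignment of $\Phi$ yields a $(b,r)$-partition by putting each variable gadget into the matching state, each clause gadget into a configuration around a satisfied literal, and completing via the wires and backbone; conversely, reading off the state of each variable gadget from an arbitrary $(b,r)$-partition produces a truth assignment which, by the clause-gadget property, satisfies every $C_j$.
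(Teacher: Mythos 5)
Your NP-membership argument is fine, but the hardness part is a plan rather than a proof, and the two points on which the whole reduction hinges are exactly the ones you defer. You never construct the clause gadget, the variable gadget, the wires, or the backbone, nor do you prove any of the claimed local properties (``a local $(b,r)$-partition of $\Ec$ exists iff at least one port is in the satisfying state''). You yourself identify the main obstacle --- that both colour classes must be \emph{spanning trees}, so local feasibility of gadgets proves nothing --- and then resolve it by fiat: a backbone ``with $b$ and $r$ chosen so as to absorb exactly the freedom needed for connectivity and acyclicity'' is precisely the hard part, not a detail. In the paper this global issue is handled by a single apex vertex $s$ joined to all $u$-vertices with $b(s)=r(s)=3n=\frac12 d_G(s)$, which forces every edge at $s$ to leave $s$ (equation $(\star)$); this degree-saturation trick is what rigidifies the orientations inside each gadget and simultaneously provides the connectivity of both trees. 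Nothing in your sketch plays that role, and without a concrete mechanism the ``conversely'' direction (reading an assignment off an arbitrary $(b,r)$-partition) cannot be argued.

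There is also a strategic mismatch in your choice of source problem. You reduce from standard \textsc{3-Sat}, which forces you to encode negated literals and the asymmetric ``at least one literal true'' semantics; neither sits naturally on a structure whose two colour classes are symmetric. The paper instead reduces from \textsc{NotAllEqual $3$-Sat} with non-negated variables, where an assignment \emph{is} a blue/red colouring and a clause is satisfied iff it contains both colours --- which translates verbatim into the clause-gadget property that $\Ec=\{\ux\vx,\uy\vy,\uz\vz\}$ must contain both a blue and a red edge (Proposition \ref{prop:clauseGadget}, proved via the triangle on $\Uc$, which cannot lie entirely in one tree). The residual blue/red symmetry is broken not by literal wires but by duplicating each clause $\C$ as a copy $\C'$ with the roles of $b$ and $r$ swapped on the $v$-vertices, and by variable cycles $\Delta_\x$ alternating $\C$- and $\C'$-vertices, along which Proposition \ref{prop:variableGadget} forces colour consistency of all occurrences of $\x$. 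Your proposal, as it stands, would need all of these ideas (or substitutes for them) to be invented and verified before it constitutes a proof; in particular the NAE choice is not cosmetic, since with plain \textsc{3-Sat} you must additionally build a negation gadget and an OR-type clause gadget compatible with the spanning-tree constraint, for which you give no construction.
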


To prove this result we give a reduction of an instance of \textsc{NotAllEqual
$3$-Sat}. In such an instance, each clause consists of three non-negated
variables and an assignment is a coloring of the variables with blue or red. A
clause is satisfied if it contains both a blue and a red variable. Schaefer
\cite{Schaefer1978} proved that this variation of \textsc{Sat} is NP-complete.
\begin{theo}[\cite{Schaefer1978}]\label{th:Schaefer1978}
  \textsc{NotAllEqual $3$-Sat} is NP-complete.
\end{theo}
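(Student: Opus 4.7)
The plan is to verify membership in NP and then to prove NP-hardness by a two-stage polynomial-time reduction from \textsc{3-Sat}. Membership in NP is immediate: a truth assignment is a polynomial-size certificate, and one can check in linear time that every clause contains both a true and a false variable. Because the target problem restricts every clause to three \emph{non-negated} variables, the hardness reduction must first produce an NAE-instance with literals and then strip the negations; I would carry this out in two steps.

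\medskip
\textbf{Step 1: Reduce \textsc{3-Sat} to the variant of \textsc{NAE-3-Sat} that allows literals.} Given a \textsc{3-Sat} instance with clauses $C_i=(\ell_{i,1}\vee\ell_{i,2}\vee\ell_{i,3})$, introduce one global auxiliary variable $s$ and, per clause, a fresh variable $y_i$. Replace $C_i$ by the two NAE-3-clauses
\[
  \mathrm{NAE}(\ell_{i,1},\ell_{i,2},y_i) \ \text{ and }\ \mathrm{NAE}(\neg y_i,\ell_{i,3},s).
\]
Since the NAE-relation is invariant under simultaneous complementation of all variables, we may fix $s$ to ``false'' in any satisfying assignment. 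Under that assumption the second clause forces $y_i\Rightarrow\ell_{i,3}$, while the first splits by the value of $y_i$: it demands $\ell_{i,1}\vee\ell_{i,2}$ when $y_i$ is false and $\neg\ell_{i,1}\vee\neg\ell_{i,2}$ when $y_i$ is true. A finite case analysis shows the two NAE-clauses are jointly satisfiable precisely when $\ell_{i,1}\vee\ell_{i,2}\vee\ell_{i,3}$ is satisfied, yielding equisatisfiability.

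\medskip
\textbf{Step 2: Remove negations to produce a monotone instance.} For each variable $x$ of the Stage-1 instance, introduce a companion $\bar x$ and substitute $\bar x$ for every occurrence of $\neg x$. To force $x\neq\bar x$ using only monotone NAE-3-clauses, add per $x$ three fresh auxiliary variables $a_x,b_x,c_x$ together with the four clauses
\[
  \mathrm{NAE}(x,\bar x,a_x),\ \mathrm{NAE}(x,\bar x,b_x),\ \mathrm{NAE}(x,\bar x,c_x),\ \mathrm{NAE}(a_x,b_x,c_x).
\]
If $x=\bar x$, the first three clauses force $a_x,b_x,c_x$ all equal to the opposite value, which violates the fourth clause; conversely, if $x\neq\bar x$, the first three are automatically satisfied and the fourth is satisfied by any non-constant choice of the auxiliaries. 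The resulting instance uses only non-negated variables, has size polynomial in the input, and is equisatisfiable with the original \textsc{3-Sat} instance.

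\medskip
The main obstacle is Step~2: the symmetric NAE-relation does not itself express inequality, so producing a monotone gadget that rigidly enforces $x\neq\bar x$ requires some care. Once the four-clause gadget above is in place, the correctness and polynomiality of the overall reduction are straightforward, and composing the two steps with a standard \textsc{3-Sat} NP-completeness reference completes the proof.
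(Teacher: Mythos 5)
Your proof is correct, but it is worth noting that the paper does not prove this theorem at all: it is quoted with a citation to Schaefer, whose dichotomy theorem for generalized satisfiability yields the NP-completeness of the monotone (non-negated) \textsc{NotAllEqual $3$-Sat} as one instance of a general classification. Your argument instead gives a self-contained, elementary reduction, and both of its stages check out. Stage~1 is the standard reduction from \textsc{$3$-Sat}: with $s$ fixed to false (legitimate, since NAE-satisfaction is invariant under complementing all variables), the clause $\mathrm{NAE}(\neg y_i,\ell_{i,3},s)$ correctly enforces $y_i\Rightarrow\ell_{i,3}$ and the case analysis on $y_i$ in $\mathrm{NAE}(\ell_{i,1},\ell_{i,2},y_i)$ gives equisatisfiability exactly as you claim. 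Stage~2 correctly isolates the real difficulty --- the NAE relation is complement-invariant, so no single monotone NAE clause can express $x\neq\bar x$ --- and your four-clause gadget resolves it: if $x=\bar x$ then $a_x,b_x,c_x$ are all forced to the common complementary value and $\mathrm{NAE}(a_x,b_x,c_x)$ fails, while if $x\neq\bar x$ the first three clauses are vacuous and the fourth is easy to satisfy. The comparison, then: the paper's citation buys brevity and rests the result on Schaefer's general theorem (which covers all Boolean constraint languages at once), whereas your route buys transparency and self-containedness, exhibiting an explicit polynomial-time reduction chain from \textsc{$3$-Sat} that a reader can verify without consulting the dichotomy machinery. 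One cosmetic point: if the source \textsc{$3$-Sat} instance may repeat a variable within a clause, your Stage-2 output can contain clauses with repeated variables; either assume (without loss of generality, as is standard) that clauses have three distinct variables, or observe that repetitions are semantically harmless for NAE.
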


Let $\Pi$ be an instance of \textsc{NotAllEqual $3$-Sat} and denote $n$ the
number of clauses. We will define a graph $G(\Pi)=(V,E)$ on $12n+1$ vertices
and two outdegree vectors $b,r \in \mathbb{Z}_+^V$ with the following property.
\begin{claim}\label{cl:Equivalence}
  There exists a $(b,r)$-partition of $E$ if and only if there exists a
  coloring of the variables satisfying $\Pi$.
\end{claim}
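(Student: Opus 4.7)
The plan is to prove the equivalence in both directions by a gadget-based local analysis. The graph $G(\Pi)$ is built from one clause gadget on $12$ vertices per clause sharing a single common vertex (accounting for the $12n+1$ vertices), and the vectors $b,r$ are tuned so that the admissible $(b,r)$-partitions of a single clause gadget correspond precisely to the six non-monochromatic colorings of its three variables. Each gadget carries three distinguished \emph{variable substructures} (edges or small edge sets) whose color in a $(b,r)$-partition will be interpreted as the truth value of the associated variable.

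For the forward direction, given a NAE-satisfying assignment $\sigma$ of $\Pi$, I would construct a $(b,r)$-partition by choosing, for each clause, a canonical edge partition and orientation of its gadget from a predefined family indexed by the color pattern that $\sigma$ assigns to the clause's three variables (which is not monochromatic by hypothesis). The remaining task is to check that the union of these local partitions is globally a pair of spanning trees with outdegrees $b$ and $r$: this amounts to verifying acyclicity, connectivity, and the outdegree sum at the common vertex, all of which follows from the design of the gluing.

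For the backward direction, I would take an arbitrary $(b,r)$-partition and argue that on each clause gadget it must coincide with one of the canonical configurations used above. The crucial step is to rule out every local configuration assigning the same color to all three variables of a clause, by exhibiting an obstruction: either a monochromatic cycle inside the gadget, a disconnected color class, or an outdegree violation at some gadget vertex forced by $b$ or $r$. Reading off the color of each variable substructure then yields an assignment $\sigma$ of $\Pi$, which is NAE-satisfying by the previous step.

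The technical crux, and the main obstacle, lies in the design and analysis of the $12$-vertex clause gadget: the edges, together with the local values of $b$ and $r$, must admit exactly the $2^3-2=6$ NAE-admissible local colorings and reject the two monochromatic ones, while remaining compatible with being stitched through the central vertex into two global spanning trees. Establishing this local classification is a finite but delicate case analysis; once it is in place, the global equivalence stated in Claim~\ref{cl:Equivalence} follows by straightforward bookkeeping at the common vertex.
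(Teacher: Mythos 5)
Your backward direction has a fatal gap: consistency of a variable's color across the several clauses that contain it. In the architecture you describe, the clause gadgets meet only at the single common vertex, so the local classification (each gadget forced into one of the six non-monochromatic patterns) is the \emph{only} constraint, and nothing prevents a $(b,r)$-partition from coloring the substructure of $\x$ blue in one clause gadget and red in another. Then ``reading off the color of each variable substructure'' does not yield a well-defined assignment $\sigma$ at all. Worse, this breaks the equivalence itself, not just your proof of it: if gadgets interact only through outdegree bookkeeping at the common vertex, then local configurations can be chosen independently per clause, so a $(b,r)$-partition would exist whenever each gadget separately admits a non-monochromatic configuration --- that is, always --- including for unsatisfiable $\Pi$. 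A reduction built on gadgets glued at one vertex therefore cannot work, and the forward direction being easy is a symptom of this.

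The paper's construction contains exactly the machinery you are missing, and it is where the real work lies. Each clause $\C$ is duplicated as a copy $\C'$ whose $v$-vertices carry the swapped constraints $(b,r)=(0,1)$ instead of $(1,0)$, and for each variable $\x$ the vertices $\vx$ and $\vxp$ over all clauses containing $\x$ are joined into a cycle $\Delta_\x$ alternating original and copy vertices. Proposition \ref{prop:clauseGadget} establishes, beyond the NAE constraint on $\Ec$ that you also identify, two facts your proposal never uses: every arc of $\Ec$ leaves $\Uc$ (so these arcs \emph{enter} $\Delta_\x$), and both trees restricted to $s \cup \Uc$ are connected. Proposition \ref{prop:variableGadget} then exploits this: the alternation of $(b,r)$ values on $\Delta_\x$ forces a circuit orientation with alternating colors, and if $u^{\C_1}_\x v^{\C_1}_\x$ is blue, the blue connectivity of both incident gadgets through $s$ forces the next gadget edge along $\Delta_\x$ to be red (else the blue tree contains a cycle through $s$), and this propagates around the whole cycle, making the color of $\x$ well defined across all original clauses. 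Without the copies, the cycles $\Delta_\x$, and this propagation argument, your ``straightforward bookkeeping at the common vertex'' cannot recover a global assignment; the inter-clause consistency mechanism, not the finite local case analysis, is the technical crux of Claim \ref{cl:Equivalence}.
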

\noindent Hence Theorem \ref{th:RecskiNPC} will follow from Theorem
\ref{th:Schaefer1978} and Claim \ref{cl:Equivalence}.\\

For each clause $\C$ of $\Pi$ we add a copy $\C'$ of that clause. Hereinafter 
$\C$ will always denote a clause that is originaly in $\Pi$ and $\C'$ will
always denote a copy.

For each clause $\C=(\x,\y,\z)$ we construct a $\C$-gadget on six vertices
$u^\C_\x, u^\C_\y, u^\C_\z, v^\C_\x, v^\C_\y, v^\C_\z$. This gadget consists of
the triangle on the vertex set $\Uc=\{u^\C_\x,u^\C_\y,u^\C_\z\}$ and the edge
set $\Ec=\{u^\C_\x v^\C_\x, u^\C_\y v^\C_\y, u^\C_\z v^\C_\z\}$ (see Figure
\ref{fg:clauseGadget}).  This construction is also done for the copy $\C'$ of
$\C$.  The out-degree vectors $b$ and $r$ are defined by $b=r=1$ for each of the
$6$ vertices denoted by the letter $u$ in the $\C$-gadget and the $\C'$-gadget,
$b=1$ and $r=0$ for each of the $3$ vertices denoted by the letter $v$ in the
$\C$-gadget, and $b=0$ and $r=1$ for each of the $3$ vertices denoted by the
letter $v$ in the $\C'$-gadget.
\begin{figure}[ht]
  \centering
  \includegraphics{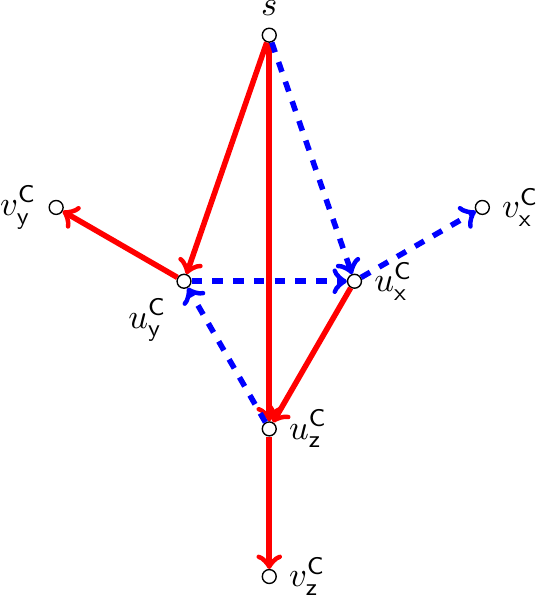}
  \caption{A clause gadget for $\C=(\x,\y,\z)$. The coloring (dashed is blue and
  plain is red) and the orientation of the edges corresponds to a blue coloring
  of $\x$ and a red coloring of $\y$ and $\z$.} 
  \label{fg:clauseGadget}
\end{figure}

We add a special vertex $s$ and, for each vertex $u$ of the $6n$ vertices
denoted by the letter $u$ we add the edge $su$. The out-degree vectors are
defined on $s$ by $b=r=3n$.

For each variable $\x$, we add a cycle $\Delta_\x$ on the vertices of type
$v^\C_\x$ and $v^{\C'}_\x$ where $\C$ contains $\x.$ This cycle alternates
vertices from $\C$-gadgets and vertices from $\C'$-gadgets (see Figure
\ref{fg:variableGadget}). This ends the definition of $G(\Pi)$.
\begin{figure}[ht]
  \centering
  \includegraphics{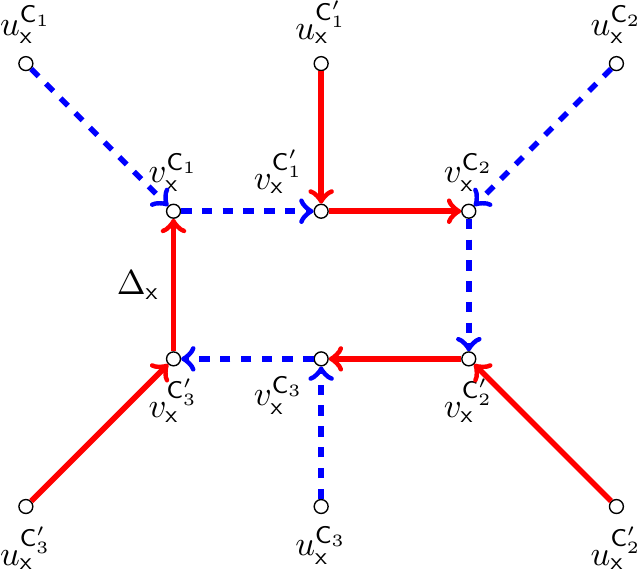}
  \caption{A variable gadget where the original clauses $\C_1$, $\C_2$ and
  $\C_3$ contain the variable $\x$. The coloring (dashed is blue and plain is
  red) of the edges corresponds to a blue coloring of $\x.$} 
  \label{fg:variableGadget}
\end{figure}

\begin{prop}\label{prop:clauseGadget}
  In every $b$-orientation of the blue tree and $r$-orientation of the red tree
  of a $(b,r)$-partition of $E$, for every clause $\C=\{\x,\y,\z\}$, each arc
  from $\Ec$ leaves $\Uc$, $\Ec$ contains both a blue and a red edge and both
  trees restricted to $s \cup \Uc$ are connected. This holds also for copies
  $\C'$ of the original clauses.
\end{prop}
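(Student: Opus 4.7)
My plan is to combine out-degree bookkeeping with the acyclicity of each of the blue and red spanning trees. I would first pin down the orientations at the special vertex $s$: since $s$ has degree exactly $6n$ in $G$ while $b(s) + r(s) = 6n$, every edge at $s$ must be outgoing in both trees, so every $su$-edge is oriented from $s$ to $u$. This preliminary observation eliminates the $s$-contribution from out-arcs at $\Uc$ and is the hinge for everything that follows.

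Next, for the first two assertions I would sum the $b$- and $r$-out-degrees over $\Uc$. Letting $T_b$ and $\sigma_b$ count respectively the blue triangle edges and the blue spokes oriented from $u$ to $v$ (and $T_r, \sigma_r, S_b, S_r$ analogously, where $S_b + S_r = 3$ counts all spokes by colour), the preliminary observation yields $T_b + \sigma_b = 3$ and $T_r + \sigma_r = 3$. Adding these and using $T_b + T_r = 3$ gives $\sigma_b + \sigma_r = S_b + S_r$, which combined with $\sigma_b \leq S_b$ and $\sigma_r \leq S_r$ forces $\sigma_b = S_b$ and $\sigma_r = S_r$: every spoke leaves $\Uc$, proving the first assertion. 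Acyclicity of each tree forbids a monochromatic triangle, so $T_b, T_r \in \{1,2\}$, hence $\sigma_b = 3 - T_b$ and $\sigma_r = 3 - T_r$ are both at least $1$, giving the second assertion.

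Finally, for the connectivity assertion I would consider the $K_4$ on $\{s\} \cup \Uc$ formed by the three $su$-edges and the three triangle edges. Each colour's subgraph of this $K_4$ is contained in the corresponding acyclic spanning tree, so it has at most $3$ edges (any $4$ edges on $4$ vertices would contain a cycle). Since the six edges are partitioned between the two colours, each colour contains exactly $3$ edges, and any three acyclic edges on four vertices form a spanning tree, so each tree restricted to $\{s\} \cup \Uc$ is connected. The same arguments apply verbatim to each copy $\C'$ by the symmetry that swaps blue and red together with the $v^{\C'}$-constraints. The main technical hurdle is the careful bookkeeping in the middle step, separating out-arcs at $\Uc$ by edge type (triangle, spoke, $su$); once the preliminary observation at $s$ removes the $su$-contribution, the two identities $T_b + \sigma_b = 3$ and $T_r + \sigma_r = 3$ drive the rest of the argument.
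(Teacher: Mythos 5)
Your proof is correct, and while the first two thirds follow the paper's route, your connectivity argument is genuinely different and arguably cleaner. Like the paper, you start from $d_G(s)=6n=b(s)+r(s)$ to force all arcs at $s$ to leave $s$, and your bookkeeping identities $T_b+\sigma_b=3$, $T_r+\sigma_r=3$ are an explicit rendering of the paper's count that all six arcs incident to $\Uc$ in $G-s$ leave $\Uc$, with exactly three of each colour; your derivation that no monochromatic triangle forces $T_b,T_r\in\{1,2\}$, hence $\sigma_b,\sigma_r\geq 1$, matches the paper's argument that $\Ec$ contains both colours. Where you diverge is the last assertion: the paper proves connectivity by an orientation-driven case analysis --- it uses $r(\ux)=1$ to show the two blue triangle edges cannot meet at $\ux$, then rules out colourings of the edges $s\ux,s\uy,s\uz$ that create monochromatic cycles, arriving at exactly two admissible configurations and reading off connectivity from them (incidentally, the paper's intermediate sentence ``one of the edges $s\ux$, $s\uy$ is blue'' appears to be a typo for $s\ux$, $s\uz$, which is what the cycle $s\ux\uz s$ and the stated final dichotomy require). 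You instead observe that $\{s\}\cup\Uc$ spans a $K_4$ whose six edges are partitioned into two forests, each therefore of at most three edges, hence exactly three each, and three acyclic edges on four vertices form a spanning tree. This is shorter, needs no orientation information at all for the connectivity claim, avoids the case analysis (and the typo) entirely, and even yields the slightly stronger conclusion that each colour restricted to $\{s\}\cup\Uc$ is a spanning tree of that $K_4$. What the paper's longer analysis buys in exchange is the explicit description of the two feasible colourings of the $s$-edges, which matches the figures and the construction in the sufficiency direction of Claim \ref{cl:Equivalence}; but since Proposition \ref{prop:variableGadget} only uses connectivity of the restrictions, your argument fully suffices for everything downstream. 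Your closing remark about a blue--red swap symmetry for the copies $\C'$ is unnecessary (and not quite the right invariance, since the swap moves the $v$-vertex constraints), but harmless: as you also say, the argument applies verbatim because it only uses the values of $b$ and $r$ on $s$ and on the $u$-vertices, which are identical in $\C$- and $\C'$-gadgets.
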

\begin{proof}
  Observe that the neigbors of $s$ are the vertices denoted by the letter $u$ in the
  $\C$-gadgets and the $\C'$-gadgets and there are $3$ neigbors in each of those
  $2n$ gadgets. So we have $d_G(s)=6n=b(s)+r(s)$ hence
  \begin{equation}\label{eq:edgesLeaveS}\tag{$\star$}
    \textrm{all the arcs incident to } s \textrm{ leave } s. 
  \end{equation}
  Hence, by $r(\Uc)=b(\Uc)=3$, each of the $6$ arcs incident to $\Uc$ in $G-s$
  leaves a vertex of $\Uc$, exactly $3$ are blue and exactly $3$ are red. So the
  arcs from $\Ec$ leave $\Uc$. The set $\Ec$ contains both a blue and a red edge
  otherwise one of the tree would contain the triangle $\ux \uy \uz$.  

  Hence, by permuting $\x$, $\y$ and $\z$ if necessary, we may assume that the
  edge $\ux \vx$ is blue and $\uy \vy$ and $\uz \vz$ are red. Thus the triangle
  $\ux \uy \uz$ contains exactly two blue edges and, by \eqref{eq:edgesLeaveS}
  and $r(\ux)=1$, the common end vertex of those two blue edges is not $\ux$. By
  permuting $\y$ and $\z$ if necessary, we may assume that $\ux \uy$ and $\uy
  \uz$ are blue and $\ux \uz$ is red.
  One of the edges $s \ux$, $s \uy$ is blue, otherwise the red tree would
  contain the triangle $s \ux \uz$, and there is at most one blue edge from $s$
  to $\Uc$, otherwise the blue tree would contain one of the cycles $s \ux \uy
  s$, $s \uy \uz s$, $s \ux \uy \uz s$. So either $s\ux$ is blue and $s\uy,s\uz$
  are red or $s\uz$ is blue and $s\ux, s\uy$ are red. In both cases each of the
  tree restricted to $s \cup \Uc$ is connected.
\end{proof}
\begin{prop}\label{prop:variableGadget}
  Let $\x$ be a variable. In every $(b,r)$-partition of $E$, all the edges $\ux
  \vx$, where $\C$ is an original clause containing $\x$, have the same color
  and all the edges $\uxp \vxp$, where $\C'$ is a copy of an original clause
  containing $\x$, have the other color.
\end{prop}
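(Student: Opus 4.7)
The plan is to analyse the variable gadget using the out-degree constraints to force $\Delta_\x$ to be a directed cycle in which blue and red edges form two complementary perfect matchings, and then invoke the spanning-tree property of each colour class to pin down the colours of the pendant edges $\ux\vx$ and $\uxp\vxp$.

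First, by Proposition~\ref{prop:clauseGadget}, every pendant $\ux\vx$ leaves $\ux$ and every pendant $\uxp\vxp$ leaves $\uxp$; in particular each vertex of $\Delta_\x$ receives its pendant edge as an in-arc. Since $b(\vx)+r(\vx)=1$ and $b(\vxp)+r(\vxp)=1$, exactly one of the two cycle edges at each such vertex must leave it, so $\Delta_\x$ is oriented as a directed cycle. Moreover the outgoing cycle edge at a $\vx$ has to be blue (since $r(\vx)=0$) and the outgoing cycle edge at a $\vxp$ has to be red, so the edges of $\Delta_\x$ split into two complementary perfect matchings $M_b$ and $M_r$, one blue and one red, the pairing of which is determined by the direction of the cycle.

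Next I would use the tree structure of each colour class. A $v$-vertex on $\Delta_\x$ has exactly three incident edges in $G(\Pi)$: its two cycle edges and its pendant. For an $M_b$-pair $\{v,v'\}$ on $\Delta_\x$, the only blue edges incident to $\{v,v'\}$ are the matching edge $vv'$ and possibly the pendants of $v$ and $v'$; since the blue subgraph is a spanning tree, and therefore connected, at least one of these two pendants must be blue, otherwise $\{v,v'\}$ would be an isolated blue component. The symmetric argument applied to the red tree gives, for each $M_r$-pair $\{w,w'\}$, that at least one of the two pendants at $w$ and $w'$ is red, or equivalently, not both of them are blue.

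Finally, I would propagate these constraints around the cycle. Encoding each pendant as a $\{0,1\}$ variable which is $1$ when the pendant is blue, the $M_b$-constraints become ``sum at least $1$'' on consecutive pairs along the cycle, and the $M_r$-constraints become ``sum at most $1$'' on the pairs shifted by one. Because the $M_b$- and $M_r$-pairs interleave, fixing the value at any single vertex forces a strict $0/1$ alternation all the way around $\Delta_\x$. Since the $v$-vertices themselves alternate between those of original-clause gadgets and those of copy gadgets, the only two feasible assignments are exactly the two configurations described in the proposition: all $\ux\vx$ share one colour and all $\uxp\vxp$ share the opposite one.

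The main obstacle is the connectivity step: one must verify that no edge of $G(\Pi)$ outside the variable gadget touches a $v$-vertex of $\Delta_\x$, so that the only way out of a matching pair in either colour is through one of the pair's pendants. Once this local fact is recorded, the alternating propagation on an even cycle is routine.
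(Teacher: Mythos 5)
Your proof is correct, but its second half takes a genuinely different route from the paper's. The opening step coincides with the paper: the pendants enter $\Delta_\x$ by Proposition~\ref{prop:clauseGadget}, each $v$-vertex has total out-degree $1$, so $\Delta_\x$ receives a circuit orientation whose edges alternate in color, blue leaving the $\vx$'s and red leaving the $\vxp$'s. From there the paper invokes the final clause of Proposition~\ref{prop:clauseGadget} (both trees restricted to $s \cup \Uc$ are connected) and argues by \emph{acyclicity}: if the pendants at both ends of a blue cycle edge were blue, the blue tree would contain a cycle through $s$ assembled from the two within-gadget paths, and symmetrically for red; this yields the mirror-image constraints (at most one blue pendant per blue pair, at least one blue pendant per red pair), propagated directly around the cycle. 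You instead argue by \emph{connectivity}: since a $v$-vertex has degree $3$ in $G(\Pi)$ --- its pendant and two cycle edges, so your flagged ``local fact'' is immediate from the construction, as $s$ and the triangles attach only to $u$-vertices --- a blue matching pair with both pendants red would form an isolated two-vertex blue component, contradicting that the blue class is a spanning tree; this yields the complementary constraints ($\geq 1$ blue pendant on blue pairs, $\leq 1$ on red pairs). Your interleaved propagation is sound: from a vertex set to $1$ one propagates deterministically through the red pairs, from a $0$ through the blue pairs, and the even length of $\Delta_\x$ with alternating original/copy vertices closes the alternation consistently, giving exactly the two configurations claimed. What your route buys is that it uses only the orientation part of Proposition~\ref{prop:clauseGadget} and never its connectivity conclusion, so that clause and its case analysis could be omitted if this proposition were the sole goal; what the paper's route buys is a propagation with no inequality bookkeeping, at the cost of needing the monochromatic $s$-paths inside the gadgets.
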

\begin{proof}
  By Proposition \ref{prop:clauseGadget}, in a $b$-orientation of the blue tree
  and an $r$-orientation of the red tree, the arcs of type $\ux \vx$ or $\uxp
  \vxp$ enter the cycle $\Delta_\x$. Hence, since in $\Delta_\x$ vertices 
  with $r=1$ and $b=0$ and vertices with $r=0$ and $b=1$ alternate, $\Delta_x$
  has a circuit orientation and the color of the edges alternates. 

  Let $\C_1$ be an original clause containing $\x$ and suppose that $u^{\C_1}_\x
  v^{\C_1}_\x$ is blue. Denote $v^{\C_2'}_\x$ the neighbor of $v^{\C_1}_\x$ in
  $\Delta_\x$ such that $v^{\C_1}_\x v^{\C_2'}_\x$ is blue. By Proposition
  \ref{prop:clauseGadget}, there exist a blue path joining $s$ and $u^{\C_1}_\x$
  in $s \cup U^{\C_1}$ and a blue path joining $s$ and $u^{{\C'}_2}_\x$ in $s
  \cup U^{\C'_2}$. Thus the edge $u^{\C'_2}_\x v^{\C'_2}_\x$ is red otherwise
  the blue tree would contain a cycle including those two paths and the path
  $u^{\C_1}_\x v^{\C_1}_\x v^{\C'_2}_\x u^{\C'_2}_\x.$ The same argument shows
  that the edge $u^{\C_3}_\x v^{\C_3}_\x$ is blue where $v^{\C_3}_\x$ is the other
  neighbor of $v^{\C'_2}_\x$ in $\Delta_\x.$ Hence a repeated application of this
  argument proves the proposition.
\end{proof}

\begin{proof}[Proof of Claim \ref{cl:Equivalence}]
  Suppose there exists a $(b,r)$-partition of $E$. By Proposition
  \ref{prop:variableGadget}, for each variable $\x$, all the edges of type $\ux
  \vx$, where $\C$ is an original clause containing $\x$, have the same color.
  Hence it is consistent to color a variable $\x$ with the color of an edge $\ux
  \vx$ for an original clause $\C$ containing $\x$. By Proposition
  \ref{prop:clauseGadget}, for each original clause $\C$ of $\Pi$, $\Ec$
  contains a blue and a red edge, thus $\C$ contains a blue and a red variable,
  that is, $\C$ is satisfied. It follows that this coloring satisfies $\Pi$.

  Now suppose that there exists a coloring satisfying $\Pi$. For each variable
  $\x$ and each original clause $\C$ containing $\x$, color $\ux \vx$ with the
  color of $\x$ and color $\uxp \vxp$ with the other color. Since each clause
  $\C$ contains a blue and a red variable, the coloring of the edges induced by
  $s \cup \Uc$ can be done as in Figure \ref{fg:variableGadget} (permute the
  variables and the colors if necessary). Do the same for the coloring of the
  edges in the $\C'$-gadgets. For each variable $\x$ alternate the color along
  the cycle $\Delta_\x$. So far we obtained a partition of $E$ into a blue and a
  red spanning tree. 

  Now orient the edges incident to every $\C$-gadget or $\C'$-gadget as in
  Figure \ref{fg:variableGadget} (or the inverse coloring of that figure).
  Observe that the multiplicity of colors is the same in $\Ec$ and in the edges
  from $s$ to $\Uc$. Hence, for each clause $\C$, there are exactly $3$ blue
  edges and $3$ red edges from $s$ to $\Uc \cup U^{\C'}$ and the outdegree of
  $s$ is $\frac{1}{2}d_G(s)=3n$ in each tree. For each variable $\x$ orient the
  bicolored cycle $\Delta_\x$ to obtain a circuit that satisfies the outdegree
  contraints on vertices of type $\vx$ and $\vxp.$ Hence we obtain a
  $b$-orientation of the blue tree and an $r$-orientation of the red tree.
\end{proof}

\bibliographystyle{plain}
\bibliography{/data/travail/bibliotheque/bibtexfile/biblio}

\end{document}